\documentclass{amsart}

\usepackage{hyperref}

\newcommand{\ghat}{{\hat{g}}}
\newcommand{\comment}[1]{}
\renewcommand{\mp}{{\lfloor\log_p k\rfloor}}
\newtheorem{theorem}{Theorem}[section]
\newtheorem{lemma}[theorem]{Lemma}

\numberwithin{equation}{section}

\begin{document}

\bibliographystyle{plain}


\title[The Erd\H{o}s-Selfridge Function]{
  An Algorithm and Estimates for the Erd\H{o}s-Selfridge Function \\
  Addendum }

\author{Brianna Sorenson}
\address{Butler University, Indianapolis, IN 46208, USA}
\email{bsorenso@butler.edu}

\author{Jonathan P.~Sorenson}
\address{Butler University, Indianapolis, IN 46208, USA}
\email{sorenson@butler.edu}
\urladdr{blue.butler.edu/$\sim$jsorenso}

\author{Jonathan Webster}
\address{Butler University, Indianapolis, IN 46208, USA}
\email{jewebste@butler.edu}

\maketitle

\section{Introduction}

This is an addendum to our paper of the same title
recently published as part of the proceedings of the ANTS XIV
conference, through Mathematical Sciences Publishers:
\begin{quote}
\url{https://doi.org/10.2140/obs.2020.4.371}
\end{quote}
Due to space and time constraints, we have three minor results
  that did not make the official version of our paper, so
  we present them here.
\begin{enumerate}
\item Two more values of $g(k)$, for $k=376,377$.
\item A proof of the claim at the end of Section 6 that 
  $$
    \limsup_{k\rightarrow\infty} \frac{ \ghat(k+1) }{ \ghat(k) }
    = \infty.
  $$
\item A proof of Lemma 6.4, and hence Theorem 6.1, with an exact
  constant near $0.78843\ldots$.
  The proof in the ANTS paper brackets the constant between 0.5306\ldots and 1.
  Thus, $\log \ghat(k) \sim (0.78843\ldots)\cdot (k/\log k)$.
\end{enumerate}

If you wish to cite this work, we encourage you to cite the ANTS paper
  linked above, and add a note to that citation with a link to this
  arxiv paper if you are specifically referring to the results
  mentioned here.


\section{Two more $g(k)$ values}

We have
\begin{eqnarray*}
  g(376) &=& 7778804220120654420924631668091 \\
  g(377) &=& 5973303871796437264595936954237  \\
\end{eqnarray*}
$g(376)$ took one week, wall time, and
$g(377)$ took about two weeks.


\section{Proof of Claim from the end of Section 6}

\begin{theorem}
  We have
  $$
  \limsup_{k\rightarrow\infty} \frac{ \ghat(k+1) }{ \ghat(k) }
    = \infty.
  $$
\end{theorem}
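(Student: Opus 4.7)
The plan is to produce an infinite sequence of $k$ along which $\ghat(k+1)/\ghat(k)$ diverges, by exploiting the formula for $\ghat$ developed in Section 6 of the ANTS paper, which expresses $\ghat(k)$ as an optimum involving prime powers of the form $p^{\mp}$. The exponents $\mp$ are piecewise constant in $k$ and change only when $k$ crosses a prime power, so all growth of $\ghat$ is concentrated at those transitions. Since $\log\ghat(k)\sim 0.78843\ldots\,k/\log k$, the typical increment $\log\ghat(k+1)-\log\ghat(k)$ is only of order $1/\log k$; the limsup statement is therefore the assertion that the actual jumps are far from uniform.

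The key step is to choose $k_n+1=p_n$ to be a sequence of primes with $p_n\to\infty$. At each such transition, the optimization defining $\ghat(p_n)$ gains access to the prime $p_n$, which is absent from the support of $\ghat(p_n-1)$. I would argue by direct comparison of the two optima that admitting $p_n$ permits replacing a bounded block of small-prime factors by the single factor $p_n$, producing a multiplicative improvement of size proportional to $p_n$, and hence $\ghat(p_n)/\ghat(p_n-1)\ge c\,p_n$ for an absolute constant $c>0$. Letting $n\to\infty$ then drives the ratio to infinity.

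The main obstacle is verifying that this improvement really scales with $p_n$ and is not cancelled by compensating decreases in the exponents of smaller primes when the optimum is re-solved at $k+1=p_n$. I expect this to follow from a local analysis of the construction in Section 6: only $O(1)$ factors are affected by any single prime-power transition, and the new contribution of $p_n$ cannot be approximated by a product of already-available smaller primes, since those exponents were chosen optimally at $k=p_n-1$. Selecting $p_n$ to sit just above thresholds implicit in the formula (for instance, primes for which the greedy step in the Section 6 construction definitely selects $p_n$) should make the bound $\ghat(p_n)/\ghat(p_n-1)\gg p_n$ explicit along the subsequence and close the argument.
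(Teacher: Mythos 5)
Your opening move --- restricting to $k+1=p_n$ prime and comparing $\ghat(p_n)$ with $\ghat(p_n-1)$ --- is exactly the paper's starting point, but the mechanism you propose for the gain does not match how $\ghat$ is actually defined, and the key quantitative claim is never proved. There is no optimization or greedy construction behind $\ghat$: it is the closed-form product $\ghat(k)=M_k/R_k$ with $M_k=\prod_{p\le k}p^{\lfloor\log_p k\rfloor+1}$ and $R_k=\prod_{p\le k}\prod_i(p-a_{ip})$, where the $a_{ip}$ are the base-$p$ digits of $k$. In this formula the newly admitted prime $p_n=k+1$ contributes $(k+1)^2$ to $M_{k+1}$ but also $k(k+1)$ new residues to $R_{k+1}$, so its net effect on the ratio is $(k+1)/k\to 1$: the new prime gives essentially no gain at all, contrary to your claim that it produces an improvement of size $\propto p_n$. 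Your supporting assertion that ``only $O(1)$ factors are affected by any single transition'' is also false here: passing from $k$ to $k+1$ changes the least significant digit $a_{0p}$ for \emph{every} prime $p\le k$ (with no carries precisely because $k+1$ is prime), so all $\pi(k)$ factors move slightly, and it is the accumulation of these small shifts that carries the theorem.

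Concretely, the paper shows $M_{k+1}=(k+1)^2M_k$ and
$$
\frac{\ghat(k+1)}{\ghat(k)}=\frac{k+1}{k}\prod_{p\le k}\frac{p-a_{0p}}{p-(a_{0p}+1)}
\;\ge\;\frac{k+1}{k}\prod_{p\le k}\frac{p}{p-1}\;\gg\;\log k
$$
by Mertens's theorem, which already forces the limsup to be infinite along primes $k+1$. Your proposed bound $\ghat(p_n)/\ghat(p_n-1)\ge c\,p_n$ is much stronger than what is needed, is not established by any argument in your sketch (the passages beginning ``I would argue,'' ``I expect,'' and ``should make the bound explicit'' are exactly where a proof would have to go), and is not guaranteed by the formula: the guaranteed lower bound on the digit product is only of order $\log k$, and a gain of order $k$ would require favorable values of $k\bmod p$ for many $p$, which you have no control over. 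To repair the argument, drop the optimization picture, work directly with $M_k/R_k$, and extract the $\log k$ gain from the uniform digit shift $a_{0p}\mapsto a_{0p}+1$ as above.
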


This proof uses some of the ideas from Section 3 in \cite{LSW97}.

\begin{proof}
  We will prove a lower bound proportional to $\log k$ in the case
    when $k+1$ is an odd prime.
  Since there are infinitely many primes, this will be sufficient to prove the theorem.

  Note that $\ghat(k+1)/\ghat(k) = (M_{k+1}/M_k) (R_k/R_{k+1})$.

  First, we look at $M_{k+1} / M_k$.
  Recall that
  $$
    M_k=\prod_{p\le k} p^{\lfloor \log_p k \rfloor +1}
     \qquad \mbox{and} \qquad
    M_{k+1}=\prod_{p\le {k+1}} p^{\lfloor \log_p (k+1) \rfloor +1}.
  $$
  We can write
  \begin{eqnarray*}
    M_{k+1}
     &=&\prod_{p\le {k+1}} p^{\lfloor \log_p (k+1) \rfloor +1} \\
     &=&(k+1)^2 \cdot \prod_{p\le k} p^{\lfloor \log_p (k+1) \rfloor +1}  \\
     &=&(k+1)^2 \cdot M_k.
  \end{eqnarray*}
  Here we use the fact that for every prime $p\le k$,
     $\lfloor \log_p (k+1) \rfloor = \lfloor \log_p k \rfloor$
    when $k+1$ is prime.

  Next we look at $R_k / R_{k+1}$.
  Using the same notation for $a_{ip}$ as above,
    and noting that the prime $k+1$ will contribute $k(k+1)$ residues, 
    by Kummer's theorem, we have
  \begin{eqnarray*}
     \frac{R_k}{R_{k+1}}
       &=& \frac{
                  \prod_{p\le k} \prod_{i=0}^{\lfloor \log_p k \rfloor}
                       (p-a_{ip})
                }{
                  k(k+1) \cdot
                  \prod_{p\le k} 
                       (p-(a_{0p}+1))
                      \prod_{i=1}^{\lfloor \log_p (k+1) \rfloor}
                       (p-a_{ip})
                } \\
       &=& \frac{1}{k(k+1)}
                  \prod_{p\le k} \frac{ p-a_{0p} }{ p-(a_{0p}+1) }.
  \end{eqnarray*}
  Again we note that
    $\lfloor \log_p (k+1) \rfloor = \lfloor \log_p k \rfloor$,
    and observe that the representation for $k+1$ in base $p$
    is the same as for $k$, with the exception of the least significant
    digit, $a_{0p}$, which is one larger, for all primes $p\le k$.
  This is only because $k+1$ is prime; $k+1 \bmod p$ cannot be zero
    unless $p=k+1$.

  We then bound
     $$\frac{ p-a_{0p} }{ p-(a_{0p}+1) } \ge \frac{p}{p-1}$$
  to obtain that
  $$
     \frac{R_k}{R_{k+1}} \ge \frac{1}{k(k+1)} e^\gamma \log k (1+o(1))
  $$
  using Mertens's theorem.
  We deduce that
  $$
   \frac{ M_{k+1} / R_{k+1} }{ M_k / R_k } \gg \frac{(k+1)^2}{k(k+1)}\log k
   \ge \log k
  $$
  to complete the proof.
\end{proof}


\section{A Constant for Theorem 6.1}

Here is our new proof of Theorem 6.1.
All the substantive changes are in Lemma 6.4.

\begin{theorem}[Theorem 6.1]
$$
\frac{\log \ghat(k)}{k/\log k} \quad\sim\quad 0.7884305\ldots
$$
\end{theorem}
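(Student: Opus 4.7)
The plan is to work directly from the identity
$$\log \ghat(k) \;=\; \sum_{p\le k}\sum_{i=0}^{\lfloor\log_p k\rfloor}\log\frac{p}{p-a_{ip}},$$
where $a_{ip}$ denotes the $i$-th base-$p$ digit of $k$. First I would dispose of the primes $p\le\sqrt{k}$, where $k$ has at least three base-$p$ digits. Each such prime contributes at most $(\lfloor\log_p k\rfloor+1)\log p \le \log k+\log p$, and since $\pi(\sqrt{k})=O(\sqrt{k}/\log k)$, the total contribution from this range is $O(\sqrt{k})=o(k/\log k)$.

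Next, the main contribution comes from primes $p\in(\sqrt{k},k]$, for which $k$ has exactly two base-$p$ digits: $k=tp+a$ with leading digit $t=\lfloor k/p\rfloor$ and trailing digit $a=k\bmod p$. Grouping primes by the value of $t\in\{1,2,\ldots,\lfloor\sqrt{k}\rfloor\}$, so that $p$ ranges over $(k/(t+1),k/t]$, the sum splits into a leading-digit piece $\sum_t\sum_p\log(p/(p-t))$ and a trailing-digit piece $\sum_t\sum_p\log(p/((t+1)p-k))$. I would show the leading-digit piece is $o(k/\log k)$: for the generic range $p\ge 2t$, the bound $\log(p/(p-t))\ll t/p$ combined with Mertens' theorem gives $O(1/\log k)$ per $t$ and $O(\sqrt{k}/\log k)$ in total, while for $t$ close to $\sqrt{k}$ each $t$-interval contains only $O(1)$ primes, each contributing at most $\log k$.

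For the trailing-digit piece, I would apply the prime number theorem on each interval. After the substitution $p=kx$, the inner sum becomes
$$\frac{k}{\log k}\int_{1/(t+1)}^{1/t}\log\frac{x}{(t+1)x-1}\,dx\;+\;(\text{lower-order PNT error}).$$
A direct antiderivative calculation---splitting the logarithm and substituting $u=(t+1)x-1$ on the second piece---evaluates the integral in closed form as $(t+1)^{-1}\log((t+1)/t)$. Summing over $t$ yields the constant
$$C\;=\;\sum_{t=1}^{\infty}\frac{1}{t+1}\log\!\left(1+\frac{1}{t}\right),$$
which converges since its $t$-th term is $O(1/t^2)$, and evaluates numerically to $0.7884305\ldots$.

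The hard part will be controlling the PNT errors uniformly in $t$. The integrand $\log(x/((t+1)x-1))$ has an integrable logarithmic singularity at $x=1/(t+1)$, so the standard partial-summation bound degrades near the lower endpoint of each $t$-interval; I would handle this by excising a short neighborhood of each endpoint before applying PNT and bounding the excised primes by the trivial estimate $\log p\le\log k$, checking that the cost remains within the $o(k/\log k)$ budget. The tail estimate $\sum_{t>T}I_t=O(1/T)$ then permits a cutoff $T=T(k)\to\infty$ slowly enough that the $t\le T$ contribution recovers the full series $C$.
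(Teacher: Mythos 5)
Your decomposition is the same as the paper's: discard $p\le\sqrt{k}$ (your $O(\sqrt{k})$ bound there is fine), split the two-digit primes into the leading-digit factor $p/(p-a_{1p})$, shown negligible, and the trailing-digit factor $p/(p-a_{0p})$, group the latter by the leading digit $t=\lfloor k/p\rfloor$, and convert each inner sum to an integral by the prime number theorem; your constant $\sum_{t\ge 1}\frac{1}{t+1}\log(1+1/t)$ is exactly the paper's series. The one genuine difference is how the per-$t$ contribution is evaluated: the paper separates $\log p$ from $-\log((t+1)p-k)$, so each piece carries a main term of order $k$ and the constant emerges only after those cancel, via an algebraic expansion of $\frac{\log k+\log(u-1)}{\log k+\log(u/(t+1))}$ inside the integral; you keep the ratio together and evaluate $\int_{1/(t+1)}^{1/t}\log\frac{x}{(t+1)x-1}\,dx=\frac{1}{t+1}\log\frac{t+1}{t}$ in closed form, which is correct and arguably cleaner, at the price of having to push PNT past the logarithmic singularity at the left endpoint --- a point the paper's write-up also treats lightly, and which your excision-plus-trivial-bound plan does handle, since $(t+1)p-k\ge 1$ makes each summand at most $\log p$.

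The one step that needs repair is the cutoff. A cutoff $T(k)\to\infty$ ``slowly'' does not suffice: for $T<t\le\sqrt{k}$ you cannot invoke PNT at all (the intervals $(k/(t+1),k/t]$ eventually contain only $O(1)$ primes), and bounding only the integral tail $\sum_{t>T}I_t=O(1/T)$ says nothing about the actual prime sum there. The available bound in that range is the trivial one, $\sum_{t>T}\sum_{k/(t+1)<p\le k/t}\log\frac{p}{(t+1)p-k}\le\sum_{p\le k/T}\log p\ll k/T$, which is $o(k/\log k)$ only when $T/\log k\to\infty$. So take $T$ to be, say, $(\log k)^2$, as the paper does; PNT remains uniform in that range since $\log p=(1+O(\log\log k/\log k))\log k$ on each interval, and with that choice the rest of your sketch goes through.
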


Applying the definitions for $M_k$ and $R_k$ above, we have
\begin{eqnarray*}
  \ghat(k)=\frac{M_k}{R_k} &=&
     \frac{ \prod_{p\le k} p^{\mp+1} }{
            \prod_{p\le k} \prod_{i=0}^{\mp} (p-a_{ip}) }  
  \quad =\quad  \prod_{p\le k} \prod_{i=0}^{\mp} \frac{p}{p-a_{ip}}  \\
    &=& \prod_{p\le\sqrt{k}} \prod_{i=0}^{\mp} \frac{p}{p-a_{ip}}  
    \cdot \prod_{\sqrt{k}<p\le k} \prod_{i=0}^{\mp} \frac{p}{p-a_{ip}}  \\
    &=& \prod_{p\le\sqrt{k}} \prod_{i=0}^{\mp} \frac{p}{p-a_{ip}}  
    \cdot \prod_{\sqrt{k}<p\le k} \frac{p}{p-a_{1p}} \frac{p}{p-a_{0p}}.  \\
\end{eqnarray*}
Here we observed that $\mp+1=2$ when $p>\sqrt{k}$.

We will show that the product on the factor involving $a_{0p}$ is
  exponential in $k/\log k$, and is therefore significant;
and the other two factors,
  the product on primes up to $\sqrt{k}$, and the factor with $a_{1p}$,
  are both only exponential in roughly $\sqrt{k}$.

We bound the first product, on $p\le \sqrt{k}$, with the following lemma.
\begin{lemma}[6.2]
    $$\prod_{p\le\sqrt{k}} \prod_{i=0}^{\mp} \frac{p}{p-a_{ip}}  
    \quad\ll\quad e^{3\sqrt{k}(1+o(1))}. $$
\end{lemma}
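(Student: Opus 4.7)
The plan is to take logarithms and use a crude per-factor upper bound, then control the resulting double sum via the prime number theorem. Since each base-$p$ digit satisfies $0 \le a_{ip} \le p-1$, we have $p - a_{ip} \ge 1$, hence
$$ \log\frac{p}{p-a_{ip}} \;\le\; \log p. $$
Applying this to every factor and noting there are $\mp + 1$ values of $i$, the log of the product to be bounded is at most $\sum_{p \le \sqrt{k}} (\mp + 1)\log p$.

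Next I would split this sum into two pieces. The first, $\sum_{p\le\sqrt{k}} \mp \log p$, satisfies $\mp \log p \le \log k$ by definition of $\mp$, so it is at most $\pi(\sqrt{k}) \log k$; by the prime number theorem, $\pi(\sqrt{k}) \sim 2\sqrt{k}/\log k$, giving $\pi(\sqrt{k}) \log k \sim 2\sqrt{k}$. The second piece, $\sum_{p\le\sqrt{k}} \log p = \theta(\sqrt{k})$, is $\sim \sqrt{k}$ by the prime number theorem. Adding the two contributions yields a total of $3\sqrt{k}(1+o(1))$, which after exponentiating is exactly the claimed bound.

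There is no real obstacle here: the per-factor bound $p/(p-a_{ip}) \le p$ is extremely loose (it assumes the worst case $a_{ip} = p-1$ at every digit and every prime simultaneously), and the prime number theorem supplies both asymptotics needed. The only item worth pausing on is making sure the two invocations of PNT line up to give the stated constant $3$: one for $\pi(\sqrt{k})$ (contributing $2\sqrt{k}$ after multiplying by $\log k$) and one for $\theta(\sqrt{k})$ (contributing $\sqrt{k}$). Any sharper estimate would be wasted since, as the paper remarks, the term involving $a_{0p}$ for $p > \sqrt{k}$ is the one that drives $\log \ghat(k)$ to be of order $k/\log k$, so an $O(\sqrt{k})$ contribution from the small-prime tail is negligible.
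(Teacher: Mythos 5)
Your proof is correct and follows essentially the same route as the paper: both start from the crude bound $a_{ip}\le p-1$, so each factor is at most $p$, and then estimate $\sum_{p\le\sqrt{k}}(\lfloor\log_p k\rfloor+1)\log p$ with prime-number-theorem-strength asymptotics. The only difference is bookkeeping: the paper bounds the exponent by $3\lfloor\log_p\sqrt{k}\rfloor$ and cites $\sum_{p\le x}\lfloor\log_p x\rfloor\log p=x(1+o(1))$ once, while you split off the $+1$, use $\lfloor\log_p k\rfloor\log p\le\log k$ with $\pi(\sqrt{k})\log k\sim 2\sqrt{k}$, and add $\theta(\sqrt{k})\sim\sqrt{k}$, arriving at the same $3\sqrt{k}(1+o(1))$.
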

\begin{proof}
We note that $a_{ip}\le p-1$, giving
\[ 
  \prod_{p\le\sqrt{k}} \prod_{i=0}^{\mp} \frac{p}{p-a_{ip}}  
  \quad\leq\quad  \prod_{p\le\sqrt{k}}  p^{\mp+1} 
  \quad\leq\quad  \prod_{p\le\sqrt{k}}  p^{3\lfloor\log_p\sqrt{k}\rfloor}. \]
From \cite[Ch. 22]{HW} we have the bound
  $\sum_{p\le x} \lfloor \log_p x \rfloor \log p = x(1+o(1))$.
Exponentiating and substituting $\sqrt{k}$ for $x$ gives the desired result.
\end{proof}

Next, we show that the product involving $a_{1p}$ is small.

\begin{lemma}[6.3]
$$
    \prod_{\sqrt{k}<p\le k} \frac{p}{p-a_{1p}} 
      \quad\ll\quad e^{O(\sqrt{k}\log\log k)}.
$$
\end{lemma}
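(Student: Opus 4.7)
The plan is to take logarithms and bound
\[ S \;=\; \sum_{\sqrt{k} < p \le k} \log\frac{p}{p - a_{1p}}, \]
using that $\lfloor\log_p k\rfloor = 1$ in this range, so $a_{1p} = \lfloor k/p \rfloor$ and in particular $a_{1p} \le k/p < \sqrt{k} < p$. I would then split the sum by the size of $a_{1p}$ relative to $p$: Case A where $a_{1p} \le p/2$, and Case B where $a_{1p} > p/2$.

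For Case A, the elementary inequality $-\log(1-x) \le 2x$ for $x \in [0, 1/2]$ gives $\log(p/(p - a_{1p})) \le 2 a_{1p}/p \le 2k/p^2$, using $a_{1p} \le k/p$. Summing over all primes $p > \sqrt{k}$ (an upper bound, since Case A is a subset) and using the standard tail estimate $\sum_{n > x} 1/n^2 = O(1/x)$, the Case A contribution is at most $2k \sum_{p > \sqrt{k}} 1/p^2 = O(\sqrt{k})$.

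For Case B, combining $a_{1p} > p/2$ with $a_{1p} \cdot p \le k$ forces $p^2 < 2k$, so such primes lie in $(\sqrt{k}, \sqrt{2k}]$. Since $p > a_{1p}$ are integers with gap at least $1$, the crude bound $\log(p/(p - a_{1p})) \le \log p$ suffices, and by the prime number theorem the Case B contribution is at most $\theta(\sqrt{2k}) - \theta(\sqrt{k}) = (\sqrt{2}-1)\sqrt{k}(1+o(1)) = O(\sqrt{k})$.

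Combining the two cases gives $S = O(\sqrt{k})$, which is in fact a bit stronger than the stated $O(\sqrt{k}\log\log k)$ bound. The main potential pitfall is Case B, where individual primes with $p - a_{1p} = 1$ can each contribute as much as $\log p \sim (\log k)/2$; the argument hinges on the PNT capping the number of primes in $(\sqrt{k}, \sqrt{2k}]$ at $O(\sqrt{k}/\log k)$, keeping their aggregate contribution $O(\sqrt{k})$.
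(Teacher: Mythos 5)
Your proof is correct, and it in fact yields the sharper bound $e^{O(\sqrt{k})}$, which of course implies the stated $e^{O(\sqrt{k}\log\log k)}$. The paper's proof is similar in spirit: it also isolates the primes just above $\sqrt{k}$, splitting the product at $2\sqrt{k}$ rather than by the size of $a_{1p}$ relative to $p$ — but since your Case B condition $a_{1p}>p/2$ together with $a_{1p}\le k/p$ forces $p\le\sqrt{2k}$, the two splits isolate essentially the same short range, which both arguments handle with the trivial bound $p/(p-a_{1p})\le p$ and a Chebyshev/PNT count of primes. The real difference is in the main range: the paper only uses $a_{1p}\le\sqrt{k}$, writes $p/(p-\sqrt{k})\le(1+1/p)^{2\sqrt{k}+1}$, and applies Mertens's theorem to $\prod_{p\le k}(1+1/p)$, which is precisely where the extra $\log\log k$ factor enters; you instead keep the decay $a_{1p}\le k/p$, use $-\log(1-x)\le 2x$ on $[0,1/2]$, and finish with the tail estimate $\sum_{p>\sqrt{k}}1/p^2=O(1/\sqrt{k})$, so the main range contributes only $O(\sqrt{k})$. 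Exploiting $2a_{1p}/p\le 2k/p^2$ rather than the uniform bound $\sqrt{k}$ is exactly the information the Mertens route discards, and the payoff is a cleaner argument and a slightly stronger lemma at no extra cost.
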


\newcommand{\sk}{{\lfloor\sqrt{k}\rfloor}}

\begin{proof} 
  We split the product at $2\sqrt{k}$.
  For the lower portion, we have
\begin{eqnarray*}
  \prod_{\sqrt{k}<p\le 2\sqrt{k}} \frac{p}{p-a_{1p}} 
  &\le& (2\sqrt{k})^{\pi(2\sqrt{k})} \quad\ll\quad e^{O(\sqrt{k})}.
\end{eqnarray*}
For the upper portion, we have
\begin{eqnarray*}
  \prod_{2\sqrt{k}<p\le k} \frac{p}{p-a_{1p}} 
    &\le& \prod_{2\sqrt{k}<p\le k} \frac{p}{p-\sqrt{k}}
    \quad = \quad
     \prod_{2\sqrt{k}<p\le k}\left(1+ \frac{2\sqrt{k}}{p} \right) \\
   &\le&
     \prod_{2\sqrt{k}<p\le k}\left(1+ \frac{1}{p} \right)^{2\sqrt{k}+1}
\end{eqnarray*}
using the fact that $(1+x/p) \le (1+1/p)^x$ if $x,p$ are positive integers.
Mertens's Theorem then gives the bound 
$$(e^\gamma (\log k) (1+o(1)))^{2\sqrt{k}+1} \quad\ll\quad
  e^{O( \sqrt{k}\log\log k)}.$$
\end{proof}

We now have
$$
  \log \ghat(k) =
    \log\left(\prod_{\sqrt{k}<p<k} \frac{p}{p-a_{0p}}\right) 
    +O(\sqrt{k}\log\log k).
$$
The following lemma wraps up the proof of our theorem.

\begin{lemma}[6.4 - new]
There exists a constant $c$ where $c\approx 0.7884305\ldots$ where
$$
 \log\left(   \prod_{\sqrt{k}<p\le k} \frac{p}{p-a_{0p}} \right)
  \quad = \quad
   c \cdot \frac{k}{\log k} (1+o(1)).
$$
\end{lemma}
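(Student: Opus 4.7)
The plan is to reduce the sum inside the logarithm to a known integral and thereby identify the constant $c$ explicitly. The key observation is that for $\sqrt{k}<p\le k$ we have $\lfloor\log_p k\rfloor=1$, hence $a_{0p}=k\bmod p$ and $a_{0p}/p=\{k/p\}$, the fractional part of $k/p$. The quantity to be estimated is therefore
$$ S(k) := -\sum_{\sqrt{k}<p\le k}\log(1-\{k/p\}). $$

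I would next stratify the primes by the integer $n:=\lfloor k/p\rfloor$, which takes values $1\le n\le\lfloor\sqrt{k}\rfloor$. The primes with $\lfloor k/p\rfloor=n$ lie in $I_n=(k/(n+1),k/n]$, and on $I_n$ the summand equals $-\log(n+1-k/p)$. Using the prime number theorem to replace $\sum_{p\in I_n}$ by $\int_{I_n}\,dt/\log t$, substituting $u=k/t$, and expanding $1/\log(k/u)$ around $1/\log k$, the leading contribution from $I_n$ is
$$ \frac{k}{\log k}\int_n^{n+1}\frac{-\log(n+1-u)}{u^2}\,du. $$
A short calculation (substitute $v=n+1-u$, expand $(n+1-v)^{-2}$ as a geometric series, integrate term by term) evaluates this inner integral in closed form as $\log(1+1/n)/(n+1)$. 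Summing over $n\ge1$ produces the closed-form constant
$$ c \;=\; \sum_{n=1}^{\infty}\frac{\log(1+1/n)}{n+1} \;=\; \sum_{n=1}^{\infty}\frac{\log n}{n(n+1)}, $$
and direct numerical evaluation confirms $c=0.7884305\ldots$.

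What remains is to control the errors. The sub-leading term in the expansion of $1/\log(k/u)$ contributes $O(k/\log^2 k)$ after summing in $n$ (exploiting convergence of $\sum_n\log n/n^2$), and the tail of the series for $c$ past $n=\sqrt{k}$ is $O(1/\sqrt{k})$, so after scaling by $k/\log k$ it yields $O(\sqrt{k}/\log k)=o(k/\log k)$. The error in the PNT approximation over each $I_n$ away from the singular endpoint is likewise negligible because the integrand has bounded variation there.

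The main obstacle is the integrable singularity of $-\log(n+1-k/t)$ at $t=k/(n+1)$: on each $I_n$ the single prime $p$ closest to that endpoint can yield an individually large term. This is handled by the crude bound $-\log(1-(k\bmod p)/p)=\log p-\log(p-(k\bmod p))\le\log p\le\log k$ for any such prime. Summing over at most one offending prime per $I_n$ (so $O(\sqrt{k})$ in total) gives an aggregate contribution of $O(\sqrt{k}\log k)=o(k/\log k)$, safely negligible against the main term of order $k/\log k$.
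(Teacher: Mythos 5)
Your main-term computation coincides with the paper's own argument: you stratify by $n=\lfloor k/p\rfloor$ exactly as the paper does by $a=a_{1p}$ (these are the same quantity for $\sqrt{k}<p\le k$), convert each prime sum to an integral via the prime number theorem, and extract the leading coefficient. Your evaluation $\int_n^{n+1}\frac{-\log(n+1-u)}{u^2}\,du=\frac{\log(1+1/n)}{n+1}$ is correct, and the resulting series is literally the paper's constant $\sum_{a\ge1}\frac{\log(1+1/a)}{a+1}$ (your rewriting as $\sum_{n\ge1}\frac{\log n}{n(n+1)}$ is a valid telescoping). Keeping the summand in the combined form $-\log(n+1-k/p)$ is in fact tidier than the paper's splitting into $\log p$ and $-\log((a+1)p-k)$, since it avoids the cancellation of two terms of size $k$. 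Your second-order term $O(k/\log^2 k)$ matches the paper's third integral as well.

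The genuine gap is in the error control for the sum-to-integral step. You apply the PNT replacement on every interval $I_n=(k/(n+1),k/n]$ for all $n$ up to $\sqrt{k}$, asserting the aggregated error is negligible because the integrand has bounded variation away from the endpoint. But for $n$ beyond a fixed power of $\log k$ the intervals $I_n$ have length $k/n^2$, eventually $O(1)$, where PNT says nothing about individual intervals; and even for moderate $n$, summing per-interval PNT errors of size $k\exp(-c\sqrt{\log k})$ over as many as $k^{1/3}$ or $\sqrt{k}$ intervals exceeds $k$, so the claim that the total discrepancy is $o(k/\log k)$ does not follow as stated. (Your ``tail of the series past $n=\sqrt{k}$'' remark controls the series for $c$, not the actual prime sum for large $n$.) The paper avoids this by truncating: it uses the integral approximation only for $a<(\log k)^2$, and bounds the entire contribution of $a\ge(\log k)^2$ crudely by $\sum_{\sqrt{k}<p\le k/(\log k)^2}\log p = O(k/(\log k)^2)$, noting the matching tail of the series is also negligible. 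You have the needed ingredient (your bound $\log\frac{p}{p-a_{0p}}\le\log p$), so the fix is immediate, but the truncation must actually be performed; as written, the uniform per-interval PNT claim over all $n\le\sqrt{k}$ is not justified. A smaller imprecision: ``at most one offending prime per $I_n$'' near the singular endpoint is not literally true for small $n$, though the aggregate bound $O(\sqrt{k}\log k)$ you draw from it is correct (such primes divide some $k+j$ with $1\le j\le\sqrt{k}$, so there are $O(\sqrt{k})$ of them in total).
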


\begin{proof}
Fix $a_{1p}=a$.  
Then $k/(a+1)<p\le k/a$, and $a_{0p}=k\bmod p=k-ap$
  and $p-a_{0p}=p-(k-ap)=(a+1)p-k$.
We have
\begin{eqnarray*}
  \log\left( \prod_{\sqrt{k}<p\le k} \frac{p}{p-a_{0p}} \right) &=&
  \log\left( 
  \prod_{a=1}^{\sqrt{k}}
    \prod_{k/(a+1)<p\le k/a} \frac{p}{(a+1)p-k} \right) \\
 &=&
  \sum_{a=1}^{\sqrt{k}}
    \sum_{k/(a+1)<p\le k/a} \left( \log p  - \log((a+1)p-k) \right). \\
\end{eqnarray*}
We split this sum into three pieces to start with:
\begin{enumerate}
\item The outer sum for $(\log k)^2 \le a \le \sqrt{k}$, and we show it
  is $o(k/\log k)$.
\item The $\log p$ term only, for $a<(\log k)^2$, and show it is
  $k+o(k/\log k)$.
\item  The $-\log((a+1)p-k)$ term, again for $a<(\log k)^2$,
  and show it is $-k+O(k/\log k)$.
\end{enumerate}
For (1), we have
\begin{eqnarray*}
  \sum_{a=(\log k)^2}^{\sqrt{k}}
    \sum_{k/(a+1)<p\le k/a} \left( \log p  - \log((a+1)p-k) \right)
   & \le &
  \sum_{a=(\log k)^2}^{\sqrt{k}}
    \sum_{k/(a+1)<p\le k/a} \log p  \\
   & \le &
    \sum_{\sqrt{k}<p\le k/(\log k)^2} \log p  \\
\end{eqnarray*}
which is $O(k/(\log k)^2)$ using
 $ \sum_{p<x}\log p = x + o(x/\log x)$.
For (2), we have
\begin{eqnarray*}
  \sum_{a=1}^{(\log k)^2}
    \sum_{k/(a+1)<p\le k/a} \log p 
   & = &
    \sum_{k/(\log k)^2<p\le k} \log p 
\end{eqnarray*}
which is $k+o(k/\log k)$.

For (3), we have
\begin{equation}
  -\sum_{a=1}^{(\log k)^2}\sum_{k/(a+1)<p\le k/a} \log({(a+1)p-k}).
\end{equation}

Using the prime number theorem, this is

$$
  \sum_{k/(a+1)<p\le k/a}  \log({(a+1)p-k}) \\
   =  o(k/\log k) +
    \int_{k/(a+1)}^{k/a} \frac{ \log({(a+1)t-k}) }{\log t} dt .
$$
Next, we substitute $u=(a+1)t/k$ so that $t=ku/(a+1)$ and
  $du=((a+1)/k) dt$ giving $dt=(k/(a+1))du$.

\begin{eqnarray*}
    \int_{k/(a+1)}^{k/a} \frac{ \log({(a+1)t-k}) }{\log t} dt 
  &=&
    \frac{k}{a+1} \int_{1}^{1+1/a} \frac{ \log({ku-k}) }{\log (ku/(a+1))} du \\
  &=&
    \frac{k}{a+1} \int_{1}^{1+1/a} 
      \frac{ \log k +\log(u-1) }{\log k + \log (u/(a+1))} du \\
\end{eqnarray*}

Next, for some algebra.  We use the following two identities:
\begin{eqnarray*}
  \frac{A+C}{A+B} &=& 1 + \frac{C-B}{A+B}. \\
  \frac{1}{A+B} &=& \frac{1}{A} - \frac{B}{A(A+B)}. \\
\end{eqnarray*}
Combining these identities gives
\begin{eqnarray*}
  \frac{A+C}{A+B} &=& 1 + \frac{C-B}{A} - \frac{B(C-B)}{A(A+B)}. \\
\end{eqnarray*}
Applying this, gives
\begin{eqnarray*}
   & & \frac{k}{a+1} \int_{1}^{1+1/a} 
      \frac{ \log k +\log(u-1) }{\log k + \log (u/(a+1))} du \\
  &=& \\
  \frac{k}{a+1} \int_{1}^{1+1/a} du &+&
  \frac{k}{a+1} \int_{1}^{1+1/a} \frac{\log(u-1)-\log(u/(a+1))}{\log k} du \\
 &-& 
  \frac{k}{a+1} \int_{1}^{1+1/a} 
    \frac{\log(u/(a+1))(\log(u-1)-\log(u/(a+1)))}{
      (\log k)(\log k+ \log (u/(a+1))) } du.
\end{eqnarray*}
We take each of these three terms in order.

We have
$$
  \frac{k}{a+1} \int_{1}^{1+1/a} du = \frac{k}{a(a+1)}.
$$
Summing over the $a$ values gives the $k$ term promised above.
Yes, the signs work out.

The next term gives our constant.
\begin{eqnarray*}
  & &\frac{k}{a+1} \int_{1}^{1+1/a} \frac{\log(u-1)-\log(u/(a+1))}{\log k} du \\
  &=&
  \frac{k}{(a+1)\log k}
     \int_{1}^{1+1/a} \log\left(\frac{(a+1)(u-1)}{u}\right) du \\
  &=&
  - \frac{k}{(a+1)\log k}  \log(1+1/a).
\end{eqnarray*}
To see this, note that the indefinite integral of
  $ \log( (a+1)(u-1)/u )$ is
  $ (u-1) \log( (a+1)(u-1)/u ) - \log u$.
We then obtain a constant on our $k/\log k$ term of
$$
  \sum_{a=1}^\infty \frac{\log (1+1/a) }{a+1} = 0.7884305\ldots .
$$
With a little algebra, the third term is easily bounded by
a small constant times $k/( (\log k) (\log k/a)$ which, when
  summed over the $a\le (\log k)^2$, gives $O( k \log\log k /(\log k)^2)$
  which is $o(k/\log k)$.

\end{proof}




\bibliography{all}

\end{document}